\documentclass[preprint,3p,twocolumn]{elsarticle}
\usepackage{amsmath, amsthm, amsfonts, amssymb, graphicx, cite, epsfig,epstopdf,color,soul}

\newtheorem{theorem}{Theorem}
\newtheorem{proposition}{Proposition}
\newtheorem{lemma}{Lemma}
\newtheorem{corollary}{Corollary}
\theoremstyle{definition}

\newtheorem{assumption}{Assumption}
\theoremstyle{remark}
\newtheorem{remark}{Remark}

\begin{document}
\begin{frontmatter}
\title{\textbf{Distributed Resource Allocation on Dynamic Networks in Quadratic Time}}
\author[rvt]{Thinh Thanh Doan\corref{cor1}}
\ead{ttdoan2@illinois.edu}
\author[ise]{Alex Olshevsky\corref{ack}}
\ead{aolshev2@illinois.edu}

\address[rvt]{Deparment of Electrical and Computer Engineering, University of Illinois,  Urbana, IL, USA}
\address[ise]{Department of Industrial and Enterprise Systems Engineering, University of Illinois,  Urbana, IL, USA}
\cortext[cor1]{Corresponding author}
\cortext[ack]{This research was supported by NSF grant CMMI-1463262.}
\begin{abstract} We consider the problem of allocating a fixed amount of resource among nodes in a network when each node suffers a cost which is a convex function of the amount of resource allocated to it. We propose a new deterministic and distributed protocol for this problem. Our main result is that the associated convergence time for the global objective scales quadratically in the number of nodes on any sequence of time-varying undirected graphs satisfying a long-term connectivity condition.
\end{abstract}
\end{frontmatter}
\section{Introduction}

We consider the problem of optimally allocating a fixed amount of resource among $n$ agents. Each agent suffers a convex cost as a function of the amount of resource allocated to it and the goal is to distribute the resource among the agents to minimize the total cost incurred. Sometimes the problem is described in terms of utilities, with each agent having a concave utility function and the goal being to maximize the total utility.

Our goal is to develop distributed protocols for this problem, meaning that nodes are only allowed to interact with neighbors in some graph or some time-varying sequence of graphs. Our motivation comes from  potential applications in sensor networks, which regularly face the problems of optimally allocating communication bandwidth and computing power \citep{wireless}. Furthermore, resource allocation is a simplification of the important ``economic dispatch'' problem wherein geographically distributed producers of electricity must coordinate to meet a fixed demand \citep{CavaroCarliZampieri14, CortesA, JorgeA}.

The problem  has an old history dating back to the classic work of Arrow and Hurwicz \citep{ArrowHurwicz60}. The first algorithm which could be implemented in a distributed way was the ``center-free'' protocol of \citep{HoServiSuri80}. In the protocol of \citep{HoServiSuri80}, each node increases the amount of resource allocated to it proportionally to the difference in  gradients between its neighbors and itself. It was shown in \citep{HoServiSuri80} that, under appropriate technical conditions, this protocol will drive the amount of resource allocated to each node to the optimal value. The term ``center-free'' was originally meant to refer to the absence of any central coordinating authority, though in this paper we will use it to mean any update wherein nodes update the amount of resource by looking at gradient differences with neighbors. The work of \citep{HoServiSuri80} has spawned a number of modern follow-ups, including  \citep{KuroseSimha89,XiaoBoyd06,HoServiSuri80,LakshmananDeFarias08, Nec-others, Necoara13} as well as the current paper.

The paper \citep{KuroseSimha89} considered the resource allocation problem in the context of optimal distribution of a database among the nodes of the network; some modifications of the algorithm of \citep{HoServiSuri80} which used not only gradient differences but also the second derivatives of the cost functions were proposed. More recently,  \citep{XiaoBoyd06} studied the case when the cost functions are strongly convex and noted that the problem of optimal weight selection for center-free methods can be cast as a semidefinite program. The work of \citep{LakshmananDeFarias08} analyzed a natural class of center-free methods on time-varying networks and provided a convergence analysis. The recent paper \citep{Nec-others, Necoara13} studied the convergence rates of distributed protocols which repeatedly choose a random pair of neighboring nodes and perform a center-free update on that pair. Finally, the work of \citep{Angelia14}  used accelerated gradient methods to design distributed protocols for a more general  problem.

Our focus in this paper is on designing protocols with good convergence speed. Specifically, we are interested at how the gap to the optimal objective value scales in the worst-case with iteration $k$ and the number of nodes $n$ in the system.

The best previously known results were provided in the antecedent papers \citep{LakshmananDeFarias08, Necoara13}. Both papers considered the class of costs which have Lipschitz-continuous derivatives. The paper \citep{Necoara13} considers schemes which randomly pick pairs of neighbors to perform a center-free update; if the pairs are chosen uniformly at random the convergence time implied by the results of \citep{Necoara13} is $O(L n^4/k)$ in expectation\footnote{The convergence rate in \citep{Necoara13} is given in terms of the eigenvalues of a certain matrix; the quartic bound above follows by putting \citep{Necoara13} together with the well-known fact that the smallest eigenvalue of the Laplacian of a connected, undirected graph on $n$ is $\Omega(1/n^2)$.} on fixed graphs; here $L$ is the largest of the Lipschitz constants of the derivatives of the cost functions. However, we note here that it is possible to shave off a factor of $n$ off this bound by adjusting the probabilities in a graph-dependent way. The paper \citep{LakshmananDeFarias08} does not give an explicit convergence rate for the objective, but  gives a worst-case $O(LBn^3/k)$ rate for the decay of the average of {\em squared} gradient differences in the graph;  here $B$ is a constant which measures how long it takes for the time-varying graph sequence to reach connectivity. Improved rates were obtained in \citep{Necoara15} and in \citep{Beck14} for a more general problem, but  under the assumption that the graph is a fixed complete graph.

In this paper, we show a convergence rate of $O \left(LBn^2/k \right)$ for the objective under the same assumptions of Lipschitz-continuous derivatives in the more general setting of time-varying graphs. Additionally, when the costs are strongly convex, we demonstrate a geometric rate of $ O \left( \left(1 - \mu/(4Ln^2) \right)^{ k/B} \right)$ where $\mu$ is the parameter of strong convexity. For both of these rates, the number of iterations until the objective is within $\epsilon$ of its optimal value scales quadratically with the number of nodes $n$. This is an improvement over the results described above, though we note that our protocol involves every node contacting its neighbors and performing an update at every step (which involves $O(|E(t)|)$ messages exchanged, where $E(t)$ is the set of edges at time $t$, and $O(n)$ updates); whereas \citep{Necoara13} relied on only a pair of randomly chosen nodes updating at each step.

The remainder of this paper is organized as follows. We give a formal statement of the problem in Section \ref{sec:ProblemFormulation}. Our protocol is described in
Section \ref{sec:GradientBalacingAlgorithm}. The convergence analysis of the protocol is
in Section \ref{sec:ConvergenceAnalysis}. Finally,  Section \ref{sec:Simulation} describes the results of some simulations and we conclude
in Section \ref{sec:Conclusion}.

\section{Problem Formulation}\label{sec:ProblemFormulation}
In this paper, we study distributed protocols for the following minimization problem,
\begin{align}
      \min &~~ \sum_{i=1}^n f_i(x_i)\label{prob:MainProblem}\\
      \text{s.t.} &~~ \sum_{i=1}^n x_i =K.\nonumber
\end{align} We assume that there are $n$ agents or nodes which we will index as $1, \ldots, n$,  that $f_i:\mathbb{R}\rightarrow\mathbb{R}$ is a convex function known only to node $i$ and $x_i\in\mathbb{R}$ is a variable stored by node $i$,  and finally that $K$ is some nonnegative number.

As remarked, this models a resource allocation problem among $n$ agents: given a finite amount $K$ of a certain resource, we must allocate it among agents $1, \ldots, n$ in an optimal way. %Usually, one assumes that agent $i$ has a concave utility function $U_i(\cdot)$ and the goal is to maximize the total utility $\sum_{i=1}^n U_i(x_i)$. Of course this is equivalent to the above %formulation by taking $U_i(x_i) = - f_i(x_i)$.

For simplicity, we introduce notation for the total objective function $F({\bf x}) = \sum_{i=1}^n f_i(x_i),$ and the feasible set $\mathcal{S} = \{\mathbf{x}\in\mathbb{R}^n:\sum_{i=1}^{n}x_i=K\}$.
%We note that here we are considering a general objective function as in \citep{Necoara13}. When $f = \sum_{i=1}^nf_i$ where $f_i$ is the cost function at node $i$, we have the objective function studied in \citep{XiaoBoyd06,LakshmananDeFarias08}.

We assume that a sequence of time-varying undirected graphs models the communication between the nodes.  Specifically, we assume we are given a sequence of  undirected graphs $\mathcal{G}(k) = (\mathcal{V},\mathcal{E}(k))$ with $\mathcal{V} = \{1,\ldots,n\}$; nodes $i$ and $j$ can send exchange messages at time $k$ if and only if $(i,j) \in \mathcal{E}(k)$. We denote by $\mathcal{N}_i(k)$ the set of neighbors of node $i$ at time $k$.

We make the following fairly standard assumption which ensures that the graph sequence $\mathcal{G}(k)$ satisfies a long-term connectivity property.

\begin{assumption}\label{assum:BConnectivity}
There exists an integer $B\geq1$ such that the undirected graph
\begin{equation}
(\mathcal{V},\mathcal{E}(\ell B)\cup \mathcal{E}(\ell B+1)\cup\ldots\cup \mathcal{E}((\ell +1)B-1))
\end{equation}
is connected for all nonnegative integers $\ell $.
\end{assumption}
%Intuitively this assumption ensures that all nodes can repeatedly influence each other through repeated interactions with neighbors in the
%graph sequence $\mathcal{G}(k)$.
%We note that it is considerably weaker  than requiring each $\mathcal{G}(k)$ be connected for all $k\geq0$.

We will also be assuming that each local objective function $f_i(\cdot)$ is differentiable with  Lipschitz continuous derivative.

\begin{assumption}\label{assum:LipschitzCondition}
For each  $i=1, \ldots, n$, the function $f_i(\cdot)$ is differentiable everywhere and there exists a constant $L_i$ such that
\begin{equation}
|f_i'(y_i)-f_i'(x_i)| \leq L_i|y_i-x_i|,\; ~~\forall x_i,y_i\in\mathbb{R}.\nonumber
\end{equation}
\end{assumption}

\smallskip

Moreover, we will be assuming that there exists at least one optimal solution.

\begin{assumption}\label{assum:ExistenceSolution}
There exists a vector $\mathbf{x}^*=(x_1^*,x_2^*,\ldots,x_n^*)$ with ${\bf x}^* \in \mathcal{S}$ which achieves the minimum in problem \eqref{prob:MainProblem}.
\end{assumption}

We will use $\mathcal{X}^*$ to denote the set of optimal solutions to problem \eqref{prob:MainProblem}; the previous
assumption ensures that $\mathcal{X}^*$ is not empty.

Finally, we will be assuming that our algorithm starts from a feasible point.

\begin{assumption} \label{assumpt:feasible} ${\bf x}(0) \in \mathcal{S}$.
\end{assumption}

{\em For the remainder of this paper, we will be assuming that Assumptions 1,2,3, 4 hold without mention.}

We conclude this section with a characterization of the points in the optimal set $\mathcal{X}^*$; the proof is immediate.

\begin{proposition}\label{prop:OptimalCondition} We have that ${\bf x} \in \mathcal{X}^*$ if and only if ${\bf x} \in \mathcal{S}$ and $f_i'(x_i)=f_j'(x_j)$ for all $i,j \in \{1, \ldots, n\}$.
\end{proposition}

\section{Main Algorithm}\label{sec:GradientBalacingAlgorithm}
In this section, we will introduce a distributed protocol, which we call the {\sl gradient balancing protocol}, to solve problem \eqref{prob:MainProblem}. Before giving a statement of the algorithm, we provide some brief motivation for its form.

Previous protocols for problem \eqref{prob:MainProblem}
 tended to be ``center-free'' updates \citep{HoServiSuri80, XiaoBoyd06, LakshmananDeFarias08, Necoara13} where node $i$ updated as \begin{small}
\begin{equation}\label{eq:WeightedGradientDescent}
x_i(k+1)\!=\!x_i(k)-\!\!\!\!\sum_{i\in\mathcal{N}_i(k)}\!\!\!w_{ij} \left(f_i'(x_i(k))\!-\!f_j'(x_j(k)) \right),
\end{equation} \end{small}
where $w_{ij}$ is a collection of nonnegative weights.  The protocol of
\citep{Necoara13}
had a different form but proceeded in the same spirit; in that protocol, edges were repeatedly chosen according to some probability distribution and a form of the above update was performed by the incident nodes.

The protocol we propose in this paper speeds up this update by employing some local ``pruning'' wherein each node tries to perform a version of Eq. (\ref{eq:WeightedGradientDescent}), but only with the two nodes whose derivative is largest and smallest in its neighborhood. Thus nodes essentially ignore neighbors whose derivatives are  close to their own.  Intuitively, by focusing on nodes whose derivatives are far apart we increase the speed at which information propagates through the network. The idea has been previously used in  \citep{NedicOlshevskyOzdaglarTsitsiklis09} and is inspired by an algorithm from Chapter 7.4 of \citep{TBbook}.

We now describe the steps node $i$ executes at step $k$ to update its value from $x_i(k)$ to $x_i(k+1)$. We assume that all nodes execute these steps synchronously, and furthermore that all four steps of the protocol given below can be executed before the graph changes from $\mathcal{G}(k)$ to $\mathcal{G}(k+1)$. Speaking informally, the protocol consists of each node repeatedly trying to ``match'' itself to the node in its neighborhood whose derivative is smallest and smaller than its own in order to perform a center-free update.

\vspace{5mm} \hspace{-6mm}
\textbf{The Gradient Balancing Protocol}

\begin{enumerate}[\leftmargin=13mm]
\item[1.] Node $i$ broadcasts its derivative $f'_i(x_i(k))$ and Lipschitz constant $L_i$ to its neighbors.
\smallskip
\item[2.]
Going through the messages it has received from neighbors as a result of step 1, node $i$ finds the neighbor with the smallest derivative that is strictly less than its own.
Let $p$ be a neighbor with this derivative; ties can be broken arbitrarily. Formally, \begin{small}  $$p \in {\arg \min_{j}} \{  f_j'(x_j(k)) ~~|~~  j \in N_i(k), ~~~ f_j'(x_j(k)) < f_i'(x_i(k)) \}.$$  \end{small} Node $i$ then sends a message containing the number $$\Delta_{ip}(k) = \frac{1}{2} \frac{f'_i(x_i(k))-f'_p(x_p(k))}{L_i+L_p}$$ to node $p$.\\ \\
If no neighbor of $i$ has a derivative strictly less
than $f'_i(x_i(k))$, node $i$ does nothing during this step.
\smallskip
\item[3.] Node $i$ goes through any $\Delta_{ji}(k)$ it has just received from its neighbors $j \in N_i(k)$ as a result of step 2, and finds the largest among them; ties can be broken arbitrarily. Let us suppose this is $\Delta_{qi}(k)$. \\ \\
Node $i$ then sets $$y_i(k) = x_i(k) + \Delta_{qi}(k).$$  Furthermore, node $i$ sends an ``accept'' message to node $q$ and a ``reject'' message to any other neighbor $j$ that sent it a $\Delta_{ji}(k)$ in step 2. \\ \\
If node $i$ did not receive any $\Delta_{ji}(k)$ as a result of step 2, it sets $y_i(k) = x_i(k)$.
\medskip
\item[4.] If node $i$ did not send out $\Delta_{ip}(k)$ during step 2, or if it received a ``reject'' from the node $p$ to whom
it sent $\Delta_{ip}(k)$,
it sets $x_i(k+1) = y_i(k)$. \\
If node $i$ has received an ``accept'' from node $p$, it sets  $$x_i(k+1) = y_i(k) - \Delta_{ip}(k).$$
\end{enumerate}
\bigskip

\medskip

Informally, we will refer to the numbers $\Delta_{ij}(k)$ as ``offers.'' We may summarize the gradient balancing protocol as follows. Each node $i$ makes an offer to the node with the smallest derivative (below its own) in its neighborhood, and the size of the offer is proportional to the difference of the derivatives normalized by the sum of the respective Lipschitz constants. Each node then accepts the largest offer it has received and rejects the rest. Note that each node ``accepts'' at most one offer and ``makes'' at most one offer. The final result is something like Eq. (\ref{eq:WeightedGradientDescent}), except that the graph has been pruned to be of degree at most two and contain only edges between nodes whose derivatives are ``far apart.''

We remark that an immediate consequence of Assumption \ref{assumpt:feasible} is that  ${\bf x}(k) \in \mathcal{S}$ for all $k \geq 0$, since every accepted offer involves an increase at the receiving node and a decrease at the offering node of the same magnitude.

For concreteness, we provide an example of our protocol; see Fig.
\ref{fig:AlgorithmSimulation}. The top part of the figure shows $x_i(k)$ and $f_i'(x_i(k))$ for each node in parenthesis, respectively. We assume that $L_i=1/2$ for all $i=1, \ldots, n$. The bottom part of the figure shows the new values $x_i(k+1)$.  As we can see that node $B$ and node $C$ send offers to node $D$ but
node $D$ only accepts node $B$'s offer. Node $D$ also sends an offer to node $E$ and node $E$ accepts since it is the only offer it receives. Nodes $A$ and $C$ do not end up participating in any accepted offers and consequently for those nodes $x_i(k+1)=x_i(k)$.

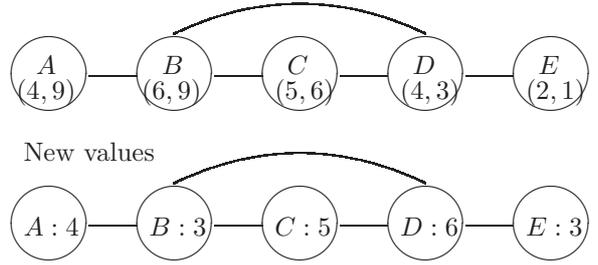
\begin{figure}[h]
\setlength{\unitlength}{0.13in} % selecting unit length
\centering % used for centering Figure
\begin{picture}(50,15)(0,0) % picture environment with the size (dimensions)
 % 32 length units wide, and 15 units high.
\put(3,6){\circle{3}}\put(2,5.5){$A:4$}\put(4.6,5.9){\line(1,0){1.9}}
\put(8,6){\circle{3}}\put(7,5.5){$B:3$} \put(9.6,5.9){\line(1,0){1.9}}
\qbezier(18,7.6)(13,10)(8,7.6)\thinlines

\put(13,6){\circle{3}}\put(12,5.5){$C:5$}\put(14.6,5.9){\line(1,0){1.9}}
\put(18,6){\circle{3}}\put(17,5.5){$D:6$}\put(19.6,5.9){\line(1,0){1.9}}
\put(23,6){\circle{3}}\put(22,5.5){$E:3$}

\put(3,12){\circle{3}}\put(2.5,12){$A$}\put(1.7,11){$(4,9)$} \put(4.6,11.9){\line(1,0){1.9}}
\put(8,12){\circle{3}}\put(7.5,12){$B$}\put(6.7,11){$(6,9)$} \put(9.6,11.9){\line(1,0){1.9}}
\qbezier(18,13.6)(13,16)(8,13.6)\thinlines

\put(2,8.5){\text{New values}}

\put(13,12){\circle{3}}\put(12.5,12){$C$}\put(12,11){$(5,6)$} \put(14.6,11.9){\line(1,0){1.9}}
\put(18,12){\circle{3}}\put(17.5,12){$D$}\put(17,11){$(4,3)$} \put(19.6,11.9){\line(1,0){1.9}}
\put(23,12){\circle{3}}\put(22.5,12){$E$}\put(22,11){$(2,1)$}
\end{picture}
\vspace{-50pt} \caption{A  step of the gradient balancing
protocol.}\label{fig:AlgorithmSimulation}
\end{figure}

\section{Convergence Analysis}\label{sec:ConvergenceAnalysis} We now turn to the convergence analysis of the gradient balancing protocol. We will prove upper bounds on $F({\bf x}(k)) - {F}({\bf x}^*)$ which imply that the time until this quantity shrinks below $\epsilon$ is quadratic in the number of nodes $n$.

%We will show that the vector ${\bf x}(k)$ approaches the optimal solution ${\bf x}^*$ and we will prove that $F({\bf x}(k)) - {F}({\bf x}^*)$ decays as $O(n^2/k)$ in terms of the number of %nodes $n$ and the iteration number $k$. The interested reader may glance at Theorem \ref{theorem:QuadraticBound} at the end of this section for an exact statement.
We start with the observation that the gradient balancing protocol may be rewritten in a particularly convenient way. Let us define $\overline{\mathcal{E}}(k) $ to be the set of pairs $(i,j)$ such that either $i$ accepts an offer from $j$ at time $k$ or vice versa. We can then write  \begin{align}\label{eq:iUpdate}
x_i(k+1)= x_{i}(k) -\!\!\!\!\!\!\!\sum_{j ~|~ (i,j)\in\overline{\mathcal{E}}(k)}\!\!\!\!\!\!\!~~\frac{f_i'(x_i(k)) - f_j'(x_j(k))}{2(L_i+L_j)},
\end{align}
%or,
%\begin{equation}\label{eq:NextUpdate}
%\mathbf{x}(k+1) = \mathbf{x}(k) + \sum_{(i,j)\in\overline{\mathcal{E}}(k)}d_{ij}(k)\ei+d_{ji}(k)\ej,
%\end{equation}
%where we define $d_{ij}(k)=-(1/2) (f_i'(x_i(k)) - f_j'(x_j(k)))/(L_i + L_j)$.

We now begin with a series of lemmas which lead the way to our
main convergence result. Our first lemma shows the monotonicity of the largest and smallest derivatives
in the network.

\begin{lemma} The function $\min_{i=1, \ldots, n} f_i'(x_i(k))$ is nondecreasing in $k$ and the
function $ \max_{i=1, \ldots, n} f_i'(x_i(k))$ is nonincreasing in $k$.  \label{mono}
\end{lemma}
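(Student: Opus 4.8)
The plan is to prove a single per-node statement that implies both claims at once: for every node $i$ and every $k$,
$$\min_{j} f_j'(x_j(k)) \;\le\; f_i'(x_i(k+1)) \;\le\; \max_{j} f_j'(x_j(k)).$$
Taking the maximum over $i$ in the right inequality gives that $\max_i f_i'(x_i(k))$ is nonincreasing, and the minimum over $i$ in the left inequality gives that $\min_i f_i'(x_i(k))$ is nondecreasing. I will carry out the upper bound in detail; the lower bound is the mirror image. Throughout I use that each $f_i$ is convex, so $f_i'$ is nondecreasing, together with the Lipschitz property of Assumption \ref{assum:LipschitzCondition}.

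The first step is to record the structural form of one update. By construction of the protocol, in a single round node $i$ makes at most one offer (in step 2, to the unique neighbor with smallest derivative strictly below its own) and accepts at most one offer (in step 3, the largest offer it received). Hence I can write $x_i(k+1) = x_i(k) + A_i - D_i$, where $A_i\ge 0$ is the offer $i$ accepted (and $A_i=0$ if it accepted none) and $D_i\ge 0$ is the offer $i$ made that was accepted (and $D_i = 0$ otherwise). For the upper bound I discard the nonnegative term $D_i$ and use monotonicity of $f_i'$: since $x_i(k+1)\le x_i(k)+A_i$, we get $f_i'(x_i(k+1)) \le f_i'(x_i(k)+A_i)$. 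If $A_i=0$ this is already $\le \max_j f_j'(x_j(k))$, so assume $i$ accepted an offer from some $q$ with $f_q'(x_q(k)) > f_i'(x_i(k))$ and $A_i = \Delta_{qi}(k) = \tfrac{f_q'(x_q(k)) - f_i'(x_i(k))}{2(L_q+L_i)}$.

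Now apply the Lipschitz bound $f_i'(x_i(k)+A_i) \le f_i'(x_i(k)) + L_i A_i$ and substitute the value of $A_i$. Writing $a = f_i'(x_i(k))$ and $c = f_q'(x_q(k))$, the right-hand side equals $a\bigl(1 - \tfrac{L_i}{2(L_q+L_i)}\bigr) + c\,\tfrac{L_i}{2(L_q+L_i)}$, a convex combination of $a$ and $c$ because the coefficient $\tfrac{L_i}{2(L_q+L_i)}$ lies in $[0,\tfrac12]$. Since $c > a$, this lies between $a$ and $c$, hence is at most $c \le \max_j f_j'(x_j(k))$, which completes the upper bound. The lower bound is obtained symmetrically: discard the favorable term $A_i$, use $x_i(k+1)\ge x_i(k) - D_i$ with $f_i'(x_i(k)-D_i)\ge f_i'(x_i(k)) - L_i D_i$, and verify this is a convex combination of $f_i'(x_i(k))$ and the (smaller) derivative of the node to which $i$ made its offer.

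The one case that needs care — and the crux of the argument — is a node that both makes and accepts an offer in the same round, where the two updates might conceivably combine to push its derivative past a neighbor's. This is exactly what the choice of the offer sizes $\Delta_{ij}$ prevents: because the sign-favorable term is nonnegative it can simply be dropped, and the coefficient $\tfrac{L_i}{2(L_i+L_q)} \le \tfrac12 < 1$ then guarantees that a single accept (resp. make) cannot make $f_i'$ overshoot the larger (resp. undershoot the smaller) of the two derivatives involved. I expect isolating this \emph{no-overshoot} inequality, and justifying that the discarded term only helps, to be the main point to state carefully.
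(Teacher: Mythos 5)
Your proof is correct and follows essentially the same route as the paper's: decompose the update into the at-most-one accepted offer and at-most-one made-and-accepted offer, drop the sign-favorable term, and use the Lipschitz bound together with the $\tfrac{L_i}{2(L_i+L_q)}\le\tfrac12$ coefficient to show the derivative cannot overshoot the other endpoint. The only cosmetic difference is that you phrase the conclusion as a per-node sandwich between $\min_j f_j'(x_j(k))$ and $\max_j f_j'(x_j(k))$, whereas the paper exhibits a specific witness node $q$ with $f_j'(x_j(k+1))\ge f_q'(x_q(k))$; the underlying computation is identical.
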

\begin{proof} Consider node $j$.  We will show that there is always some node $q$ such that
$f_j'(x_j(k+1)) \geq f_q'(x_q(k))$. This will prove the monotonicity of the smallest derivative; the monotonicity of the
largest derivative is proved analogously.

If $j$ does not make any offers during step 2 of
the gradient balancing protocols, or if it makes an offer which is rejected, then we must have $x_j(k+1) \geq x_j(k)$; since $f_j(\cdot)$ is
convex this implies that $f_j'(x_j(k+1)) \geq f_j'(x_j(k))$. Thus we may take $q=j$ in this case.

On the other hand, suppose $j$ makes an offer during step 2 which is accepted, say by node $m$. From
Eq. (\ref{eq:iUpdate}),
\[ x_j(k+1)  \geq  x_j(k) - \frac{f_j'(x_j(k)) - f_m'(x_m(k))}{2 (L_m + L_j) }, \] and since {$f_j(\cdot)$ is convex and differentiable with $L_j$-Lipschitz continuous derivative},  \begin{small}
\begin{align*} f_j'(x_j(k+1) )& \geq  f_j' \left(  x_j(k) - \frac{f_j'(x_j(k)) - f_m'(x_m(k))}{2 (L_m + L_j) } \right)  \\
& \geq  f_j'(x_j(k)) -  \frac{L_j \left( f_j'(x_j(k)) - f_m'(x_m(k)) \right)}{2(L_m + L_j)} \\
& >  f_m'(x_m(k)),
\end{align*} \end{small} so that we may take $q=m$ in this  case.
\end{proof}

We will often need to be making statements about the $d$ largest derivatives at time $k$. To avoid overburdening the reader
with notation, we will begin many of our lemmas with a variation on the  words ``let us relabel the vertices so that the sequence
$f_1'(x_1(k)), f_2'(x_2(k), \ldots, f_n'(x_n(k))$ is nonincreasing.'' Under this assumption, the $d$ largest derivatives may be taken to be
$f_1'(x_1(k)), \ldots, f_d'(x_d(k))$.

Furthermore, assuming the nodes have been relabeled as above, we will say that {\em edge $(i,j)$ crosses the cut $d$} if one of $i,j$ belongs to
$\{1,\ldots, d\}$ while the other belongs to $\{d+1,\ldots, n\}$.

An example of the use of these definitions is in the following corollary, which is an immediate consequence of  Lemma \ref{mono}.

\begin{corollary} Let us relabel the nodes so that the sequence $f_1'(x_1(k)), f_2'(x_2(k)), \ldots, f_n'(x_n(k))$ is
nonincreasing. Suppose that during times $t = k, k+1, \ldots, k+T$ we have that $\overline{\mathcal{E}}(t)$ did not include
any edges crossing the cut $d$. Then for $i=1, \ldots, d$, we have that \[ f_i'(x_i(t+T+1))  \geq f_d'(x_d(t)), \] while for
$i=d+1, \ldots, n$, \[ f_i'(x_i(t+T+1))  \leq f_{d+1}'(x_{d+1}(t)). \] \label{order}
\end{corollary}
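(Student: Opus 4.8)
The plan is to exploit the hypothesis that no edge of $\overline{\mathcal{E}}(t)$ crosses the cut $d$ for $t = k, \ldots, k+T$: this means that throughout this window the nodes in the top group $\{1,\ldots,d\}$ only ever make or accept offers from other nodes in the top group, and likewise for the bottom group $\{d+1,\ldots,n\}$. The two groups therefore evolve independently of one another, and I would prove each of the two claimed inequalities by \emph{localizing} the argument of Lemma \ref{mono} to the relevant group.

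Concretely, for the first inequality I would track the quantity $m(t) := \min_{i \in \{1,\ldots,d\}} f_i'(x_i(t))$ and show it is nondecreasing for $t = k, \ldots, k+T$. Fix such a $t$ and a node $j \in \{1,\ldots,d\}$, and repeat the case analysis from the proof of Lemma \ref{mono}. If $j$ makes no accepted offer at step $t$, then $x_j(t+1) \geq x_j(t)$ and convexity gives $f_j'(x_j(t+1)) \geq f_j'(x_j(t)) \geq m(t)$. If instead $j$ makes an accepted offer to some node $m$, then the same Lipschitz-and-convexity computation as in Lemma \ref{mono} yields $f_j'(x_j(t+1)) > f_m'(x_m(t))$; the key point is that, because no edge crosses the cut, this partner $m$ also lies in $\{1,\ldots,d\}$, so $f_m'(x_m(t)) \geq m(t)$. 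In either case $f_j'(x_j(t+1)) \geq m(t)$, and taking the minimum over $j$ gives $m(t+1) \geq m(t)$. Iterating from $t=k$ to $t=k+T$, and recalling that the relabeling makes node $d$ the smallest derivative in the top group so that $m(k) = f_d'(x_d(k))$, I obtain $f_i'(x_i(k+T+1)) \geq m(k+T+1) \geq m(k) = f_d'(x_d(k))$ for every $i \in \{1,\ldots,d\}$.

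The second inequality follows by the mirror-image argument applied to the bottom group: tracking $M(t) := \max_{i \in \{d+1,\ldots,n\}} f_i'(x_i(t))$ and using the ``largest derivative is nonincreasing'' half of Lemma \ref{mono} restricted to $\{d+1,\ldots,n\}$ shows that $M(t)$ is nonincreasing, and since the relabeling gives $M(k) = f_{d+1}'(x_{d+1}(k))$ we conclude $f_i'(x_i(k+T+1)) \leq f_{d+1}'(x_{d+1}(k))$ for all $i \in \{d+1,\ldots,n\}$.

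The only real content beyond Lemma \ref{mono}, and the step I would treat most carefully, is verifying that the witness node $q$ produced in that lemma's proof (either $j$ itself or its offer-partner $m$) never leaves the group containing $j$; this is precisely what the no-crossing hypothesis guarantees, and it is what lets the global monotonicity of Lemma \ref{mono} be upgraded to monotonicity of the within-group extreme derivatives. I would also flag that the conclusion should be read at time $k+T+1$, the configuration reached after the last crossing-free step $t=k+T$.
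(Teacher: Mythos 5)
Your proof is correct and matches the paper's intent: the paper states the corollary without proof as an ``immediate consequence'' of Lemma \ref{mono}, and the intended argument is exactly your localization of that lemma's case analysis to each side of the cut, using the no-crossing hypothesis to keep the witness node $q$ within the same group. Your remark that the conclusion should be read at time $k+T+1$ (the statement's use of $t$ on both sides is a typo for $k$) is also apt.
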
 \vspace{-0.5cm}
Our next lemma essentially says that cuts in the graph which separate larger derivatives from smaller derivatives must have
edges in $\overline{\mathcal{E}}(k)$ which cross them eventually. The proof follows  from Assumption 1 on $B$-connectivity and is  the same as the proof of Lemma 3
in \citep{NedicOlshevskyOzdaglarTsitsiklis09}, so we omit it.

\begin{lemma}\label{prop:BConnectivityRelaxation} Let $\ell  \geq 0$ and let us relabel the nodes so that the sequence
$f_1'(x_1(\ell B)), f_2'(x_2(\ell B)), \ldots, f_n'(x_n(\ell B))$ is nonincreasing. Then
for every $d\in\{1,\ldots,n-1\}$, either $f_d'(x_d(\ell B))=f_{d+1}'(x_{d+1}(\ell B))$, or there
exist some time $k\in\{\ell B,\ldots,(\ell +1)B-1\}$ when an edge in $\overline{\mathcal{E}}(k)$ crosses the cut $d$.
\end{lemma}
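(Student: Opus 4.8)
The plan is to argue by contradiction, using Corollary~\ref{order} to freeze the cut in place and Assumption~\ref{assum:BConnectivity} to force an edge across it. Relabel the nodes at time $\ell B$ so that $f_1'(x_1(\ell B)) \geq \cdots \geq f_n'(x_n(\ell B))$, fix $d$, and suppose the first alternative fails, so that $f_d'(x_d(\ell B)) > f_{d+1}'(x_{d+1}(\ell B))$; write $a = f_d'(x_d(\ell B))$ and $b = f_{d+1}'(x_{d+1}(\ell B))$, so $a > b$. Assume, toward a contradiction, that no edge of $\overline{\mathcal{E}}(k)$ crosses the cut $d$ for any $k \in \{\ell B, \ldots, (\ell+1)B-1\}$.

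First I would use Corollary~\ref{order} to show the cut stays separated throughout the window: since no accepted edge crosses cut $d$ during $\{\ell B, \ldots, k\}$, every node $i \leq d$ keeps $f_i'(x_i(k)) \geq a$ and every node $i > d$ keeps $f_i'(x_i(k)) \leq b$. Next, Assumption~\ref{assum:BConnectivity} guarantees that the union graph over the window is connected, so for the cut $(\{1,\ldots,d\},\{d+1,\ldots,n\})$ there is a time $k^*$ in the window and an edge $(i,j) \in \mathcal{E}(k^*)$ with $i \leq d < j$. At time $k^*$ node $i$ then has a neighbor $j$ with $f_j'(x_j(k^*)) \leq b < a \leq f_i'(x_i(k^*))$, so in step~2 it makes an offer to its minimum-derivative neighbor $p$; that neighbor has derivative at most $f_j'(x_j(k^*)) \leq b < a$, so it lies on the low side of the cut. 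Hence at least one cross-cut offer is always made.

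The crux is to promote this made offer to an \emph{accepted} one, i.e. an edge of $\overline{\mathcal{E}}(k^*)$ crossing the cut. The clean local mechanism I would exploit is: a low-side node whose derivative is maximal among the low side can only receive offers from nodes of strictly larger derivative, all of which must sit on the high side; so if such a node receives any offer it must accept a high-side offer, producing a crossing edge in $\overline{\mathcal{E}}(k^*)$ and the desired contradiction. When all $L_i$ are equal this closes the argument immediately, since a cross-cut offer always has a strictly larger derivative gap, and hence strictly larger magnitude $\tfrac{1}{2}\tfrac{f_i'-f_p'}{L_i+L_p}$, than any competing within-low-side offer to the same recipient, so $p$ cannot reject it.

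The main obstacle is the general case of unequal $L_i$. Here a low-side node $s$ may out-bid a high-side node $i$ for the same recipient $p$ — a small $L_s$ can make $\frac{f_s'-f_p'}{L_s+L_p}$ exceed $\frac{f_i'-f_p'}{L_i+L_p}$ even though $f_s' < f_i'$ — so a single crossing time need not yield an accepted crossing edge, and the purely instantaneous version of the argument breaks. The resolution, mirroring the proof of Lemma~3 in \citep{NedicOlshevskyOzdaglarTsitsiklis09}, is to run the argument over the entire $B$-step window rather than at one instant: one tracks how repeatedly blocked cross offers rearrange the low side, using the strict separation preserved by Corollary~\ref{order} together with $B$-connectivity, until a high-side offer can no longer be out-bid. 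I expect this window-level bookkeeping, rather than the contradiction set-up, to be the part demanding the most care.
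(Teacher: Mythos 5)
Your set-up is the right one and matches the argument the paper defers to (Lemma 3 of \citep{NedicOlshevskyOzdaglarTsitsiklis09}): assume no accepted edge crosses the cut, use Corollary~\ref{order} to keep the two sides separated by the fixed gap $a-b>0$ throughout the window, invoke Assumption~\ref{assum:BConnectivity} to produce a time $k^*$ and a physical edge of $\mathcal{E}(k^*)$ across the cut, and conclude that its high-side endpoint makes an offer to some low-side node $p$. Your equal-$L_i$ argument is also correct: every high-side offer to $p$ strictly exceeds every low-side offer to $p$, so the offer $p$ accepts crosses the cut. But the step you defer --- promoting a made cross-cut offer to an \emph{accepted} one when the $L_i$ differ --- is not bookkeeping that can be absorbed by working over the whole window. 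Assumption~\ref{assum:BConnectivity} only guarantees that the union of edge sets over the window is connected, so the cut may be crossed by a physical edge at exactly one time $k^*$; if the cross-cut offer is out-bid at that single opportunity there is no ``repeatedly'' to track, and the contradiction never materializes.

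In fact the obstruction is fatal to this strategy for the protocol exactly as written, not merely hard. Take $n=3$ with $f_1'(x_1(\ell B))=10$, $f_2'(x_2(\ell B))=1$, $f_3'(x_3(\ell B))=0$, $L_1=100$, $L_2=L_3=10^{-2}$ (realizable with quadratics); let $\mathcal{E}(\ell B)=\{(1,3),(2,3)\}$ and $\mathcal{E}(k)=\emptyset$ for every other $k$ in the window, so Assumption~\ref{assum:BConnectivity} holds. At time $\ell B$ node $1$ offers $\Delta_{13}=\tfrac{1}{2}\cdot\tfrac{10}{100.01}\approx 0.05$ and node $2$ offers $\Delta_{23}=25$ to node $3$, which accepts node $2$'s offer; hence $\overline{\mathcal{E}}(\ell B)=\{(2,3)\}$ and no accepted edge ever crosses the cut $d=1$ even though $f_1'(x_1(\ell B))\neq f_2'(x_2(\ell B))$. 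So no window-level argument can close your gap under the largest-$\Delta$ acceptance rule. The cited lemma of \citep{NedicOlshevskyOzdaglarTsitsiklis09} goes through because there a node accepts the offer originating from the neighbor with the largest \emph{value}, under which the high-side offer always wins --- exactly the mechanism powering your equal-$L_i$ case. Your proof becomes complete (and essentially the intended one) if acceptance in step 3 is decided by the sender's derivative rather than by the magnitude of $\Delta_{qi}(k)$, or if all $L_i$ coincide; for general $L_i$ and the protocol as stated, the crux you flagged is a genuine, unclosed gap.
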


We now proceed to our first substantial lemma, which shows that the gradient balancing protocol is a descent protocol (meaning that
$F({\bf x}(k))$ is nonincreasing).

\begin{lemma}\label{lem:DecreasingFuncValue} \begin{small}
\begin{align}\label{eq:FunctionDecreasing}
F(\mathbf{x}(k+1))\leq F(\mathbf{x}(k)) -\!\!\!\!\!\!\sum_{(i,j)\in
\overline{\mathcal{E}}(k)}\!\!\!\!\!\!\!\frac{(f_{i}'(x_i(k))-f_{j}'(x_j(k)))^2}{4(L_{i}+L_{j})}\cdot
\end{align} \end{small}
\end{lemma}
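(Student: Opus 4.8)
The plan is to start from the standard descent inequality implied by Assumption \ref{assum:LipschitzCondition}: since each $f_i$ has $L_i$-Lipschitz continuous derivative,
$$f_i(x_i(k+1)) \le f_i(x_i(k)) + f_i'(x_i(k))\,(x_i(k+1)-x_i(k)) + \frac{L_i}{2}\,(x_i(k+1)-x_i(k))^2.$$
Summing over all $i$ and abbreviating $g_i := f_i'(x_i(k))$ and $d_i := x_i(k+1)-x_i(k)$, I would reduce the claim to separately bounding the linear term $\sum_i g_i d_i$ and the quadratic term $\sum_i \frac{L_i}{2}\,d_i^2$, using the explicit form of $d_i$ from Eq.~(\ref{eq:iUpdate}).

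For the linear term, substituting $d_i = -\sum_{j:(i,j)\in\overline{\mathcal{E}}(k)} \frac{g_i-g_j}{2(L_i+L_j)}$ and reorganizing the resulting double sum by edges, each unordered pair $(i,j)\in\overline{\mathcal{E}}(k)$ contributes the symmetric combination $g_i\frac{g_i-g_j}{2(L_i+L_j)} + g_j\frac{g_j-g_i}{2(L_i+L_j)}$, so that $\sum_i g_i d_i = -\sum_{(i,j)\in\overline{\mathcal{E}}(k)} \frac{(g_i-g_j)^2}{2(L_i+L_j)}$. This already yields a term of exactly the right shape, but with coefficient $1/2$ rather than $1/4$; the task is then to show that the quadratic term consumes at most $1/4$ of it.

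The crux is the quadratic term, and this is where the pruned structure of the protocol enters. I would first argue from the protocol that each node is incident to at most two edges of $\overline{\mathcal{E}}(k)$, since it makes at most one offer (step 2) and accepts at most one offer (step 3). Moreover, when node $i$ is the one making an accepted offer it is the endpoint with the strictly larger derivative, while when it accepts an offer it is the endpoint with the strictly smaller derivative; hence the (at most two) terms comprising $d_i$ carry opposite signs. Consequently the cross term in the square is nonpositive, giving
$$d_i^2 \le \sum_{j:(i,j)\in\overline{\mathcal{E}}(k)} \frac{(g_i-g_j)^2}{4(L_i+L_j)^2}.$$
Multiplying by $L_i/2$, summing over $i$, and regrouping by edges gives $\sum_i \frac{L_i}{2}\,d_i^2 \le \sum_{(i,j)\in\overline{\mathcal{E}}(k)} \frac{(g_i-g_j)^2}{8(L_i+L_j)}$.

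Combining the two bounds yields $F(\mathbf{x}(k+1)) \le F(\mathbf{x}(k)) - (\tfrac12-\tfrac18)\sum_{(i,j)\in\overline{\mathcal{E}}(k)} \frac{(g_i-g_j)^2}{L_i+L_j}$, and since $\tfrac12-\tfrac18 = \tfrac38 \ge \tfrac14$ the stated inequality follows with room to spare. I expect the main obstacle to be precisely the handling of degree-two nodes in the quadratic term: a crude bound on $d_i^2$ for a node participating in two transfers would retain a cross term, and it is only the sign structure of the offers (always from larger to smaller derivative) enforced by the protocol that makes this cross term harmless. Everything else is a routine manipulation of the descent lemma.
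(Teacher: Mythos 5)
Your proof is correct and follows essentially the same route as the paper's: the descent lemma from Assumption \ref{assum:LipschitzCondition}, the exact edge-wise evaluation of the linear term as $-\sum_{(i,j)\in\overline{\mathcal{E}}(k)}\frac{(f_i'(x_i(k))-f_j'(x_j(k)))^2}{2(L_i+L_j)}$, and a bound on the quadratic term that rests on each node being incident to at most two edges of $\overline{\mathcal{E}}(k)$. The only divergence is in that last step: the paper applies the crude inequality $(a+b)^2\le 2(a^2+b^2)$, which needs nothing beyond the degree bound and already produces the stated constant $1/4$, whereas you observe that a degree-two node makes one (accepted) offer to a neighbor with strictly smaller derivative and accepts one offer from a neighbor with strictly larger derivative, so the two summands in $x_i(k+1)-x_i(k)$ have opposite signs and $(a+b)^2\le a^2+b^2$ applies, yielding the sharper constant $3/8$. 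Your sign observation is valid and does buy a better constant, but your closing claim that it is \emph{necessary} --- that a crude bound on $d_i^2$ ``would retain a cross term'' that only the sign structure renders harmless --- is not right: the paper's factor-of-two bound absorbs the cross term with no sign information at all and still lands exactly on the $1/4$ in the statement.
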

\begin{proof} Assumption 2 immediately implies that for all $x_i,y_i\in\mathbb{R}$
\begin{equation}
f_i(y_i)\leq f_i(x_i) + f_i'(x_i)(y_i-x_i) + \frac{L_i}{2}(y_i-x_i)^2.\nonumber
\end{equation}
Summing up both sides over $i=1, \ldots, n$, we obtain
\begin{equation}
F(\mathbf{y})\leq F(\mathbf{x}) + \sum_{i=1}^n f_i'(x_i)(y_i-x_i) + \sum_{i=1}^n \frac{L_i}{2}(y_i-x_i)^2.\nonumber
\end{equation}
Replacing $\mathbf{x}$ by $\mathbf{x}(k)$, $\mathbf{y}$ by $\mathbf{x}(k+1)$, we obtain
\begin{align}\label{eq:Inequality1}
F(\mathbf{x}(k+1))&\leq\; F(\mathbf{x}(k))\nonumber\\
&+\sum_{i=1}^n f_i'(x_i(k))(x_i(k+1)-x_i(k)) \nonumber\\
&+\sum_{i=1}^n \frac{L_i}{2}(x_i(k+1)-x_i(k))^2.
\end{align}
On the other hand, one consequence of Eq. \eqref{eq:iUpdate} is that \begin{small}
\begin{align}\label{eq:Inequality2}
&\sum_{i=1}^n f_i'(x_i(k))(x_i(k+1)-x_i(k))\nonumber\\
&= -\sum_{i=1}^n ~\sum_{j ~|~ (i,j)\in\overline{\mathcal{E}}(k)} ~ \frac{f_i'(x_i(k))}{2(L_i+L_j)}(f_i'(x_i(k)) - f_j'(x_j(k)))\nonumber\\
&= -\sum_{(i,j)\in\overline{\mathcal{E}}(k)}\frac{(f_i'(x_i(k)) - f_j'(x_j(k)))^2}{2(L_i+L_j)}.
\end{align} \end{small}
Furthermore, another consequence of Eq. \eqref{eq:iUpdate} is that
\begin{align}\label{eq:Inequality3}
&\sum_{i=1}^n \frac{L_i}{2}(x_i(k+1)-x_i(k))^2\nonumber\\
&=\sum_{i=1}^n \frac{L_i}{2}\Bigg(\sum_{j ~|~ (i,j)\in\overline{\mathcal{E}}(k)}\frac{f_i'(x_i(k)) - f_j'(x_j(k))}{2(L_i+L_j)}\Bigg)^2\nonumber\\
&\leq\sum_{i=1}^n ~ \sum_{j ~|~ (i,j)\in\overline{\mathcal{E}}(k)}\frac{2L_i(f_i'(x_i(k)) - f_j'(x_j(k)))^2}{8(L_i+L_j)^2}\nonumber\\
&=\sum_{(i,j)\in\overline{\mathcal{E}}(k)}\frac{(f_i'(x_i(k)) - f_j'(x_j(k)))^2}{4(L_i+L_j)},
\end{align} where the inequality follows because  node $i$ is incident to at most two edges in $\overline{\mathcal{E}}(k)$, which allows us
to use the inequality $(a+b)^2 \leq 2 (a^2 + b^2)$. Finally,
we substitute Eq. \eqref{eq:Inequality2} and Eq. \eqref{eq:Inequality3}
into Eq. \eqref{eq:Inequality1} to obtain the statement of the lemma.
\end{proof}

Glancing at Eq. \eqref{eq:FunctionDecreasing} which we have just derived, we might guess that the second
term on the right is ultimately going to determine how fast the gradient balancing protocol will converge.
Our next two lemmas provide useful lower bounds for this
quantity over the time interval $k=\ell B, \ldots, (\ell+1)B-1$.

\begin{lemma}\label{lem:BoundDiffGradOnBInterval} Let us relabel the nodes so that
the sequence $f_1'(x_1(\ell B)), f_2'(x_2(\ell B)), \ldots, f_n'(x_n(\ell B))$ is in nonincreasing order. Then, \begin{small}
\begin{align}
&\sum_{k=\ell B}^{(\ell +1)B-1}\sum_{(i,j)\in
\overline{\mathcal{E}}(k)}(f_i'(x_i(k))-f_j'(x_j(k)))^2\nonumber\\
&\geq\sum_{d=1}^{n-1}(f_d'(x_d(\ell B))-f_{d+1}'(x_{d+1}(\ell B)))^2\nonumber.
\end{align} \end{small}
\end{lemma}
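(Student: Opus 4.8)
The plan is to produce, for each cut $d$ whose gap is nonzero (writing $g_d := f_d'(x_d(\ell B))$ for the relabeled derivatives at time $\ell B$), a single edge-and-time ``witness'' in $\overline{\mathcal{E}}$ whose squared derivative difference already pays for that cut's contribution to the right-hand side, and then to sum these contributions while carefully controlling the double counting that arises because one edge can cross many cuts at once.

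First I would fix the relabeling at time $\ell B$ so that $g_1 \geq g_2 \geq \cdots \geq g_n$ and set $a_d := g_d - g_{d+1} \geq 0$, so the target is $\sum_{d=1}^{n-1} a_d^2$. For a cut $d$ with $a_d > 0$, Lemma \ref{prop:BConnectivityRelaxation} guarantees that some edge of $\overline{\mathcal{E}}(k)$ crosses cut $d$ for some $k$ in the interval; let $k_d$ be the first such time and let $e_d = (i_d,j_d)$, with $i_d \leq d < j_d$ in the relabeled order, be a crossing edge at that time. Because no edge crosses cut $d$ during $\ell B, \ldots, k_d-1$, Corollary \ref{order} applies to that sub-interval and yields $f_{i_d}'(x_{i_d}(k_d)) \geq g_d$ and $f_{j_d}'(x_{j_d}(k_d)) \leq g_{d+1}$ (the degenerate case $k_d = \ell B$ being immediate from the ordering). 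Hence the squared difference carried by this witness is at least $a_d^2$.

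The main obstacle is that several cuts may share the same witness edge-time pair, so I cannot simply add one term per cut. To handle this I would assign each cut $d$ with $a_d > 0$ to the pair $(e_d,k_d)$ and group the cuts by the pair they land on; these groups partition the relevant cuts, since both $k_d$ and the chosen $e_d$ are determined by $d$. For a fixed pair $(e,k)$ with $e=(i,j)$, $i<j$, every assigned cut $d$ satisfies $i \leq d \leq j-1$ together with $f_i'(x_i(k)) \geq g_d$ and $f_j'(x_j(k)) \leq g_{d+1}$; taking the smallest and largest assigned cut and telescoping gives
\[
f_i'(x_i(k)) - f_j'(x_j(k)) \;\geq\; \sum_{d \text{ assigned to } (e,k)} a_d \;\geq\; 0.
\]
Squaring and using the elementary inequality $\left(\sum_d a_d\right)^2 \geq \sum_d a_d^2$ shows that this single pair's contribution $(f_i'(x_i(k)) - f_j'(x_j(k)))^2$ already dominates $\sum_{d \text{ assigned}} a_d^2$.

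Finally I would sum over the distinct witness pairs. Since the groups partition the cuts with $a_d>0$, and distinct witness pairs are distinct terms of the full double sum on the left with nonnegative summands, the total of the per-pair bounds assembles into $\sum_d a_d^2 \leq \sum_{k=\ell B}^{(\ell+1)B-1}\sum_{(i,j)\in\overline{\mathcal{E}}(k)}(f_i'(x_i(k))-f_j'(x_j(k)))^2$, which is exactly the claim. The delicate points to get right are the first-crossing application of Corollary \ref{order} and the charging step, which is precisely where $\left(\sum a_d\right)^2 \geq \sum a_d^2$ converts ``one edge crossing many cuts'' into a valid bound.
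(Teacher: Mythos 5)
Your proposal is correct and follows essentially the same route as the paper's proof: identify the first time each cut is crossed, apply Corollary \ref{order} to that first-crossing time, telescope the gaps over the cuts charged to a single edge-time pair, and conclude with $\left(\sum_d a_d\right)^2 \geq \sum_d a_d^2$. The only (harmless) difference is that you assign each cut to a unique witness pair so the groups form a partition, whereas the paper works with the sets $F_{ij}(k)$ of all first-crossed cuts per edge; both yield the same bound.
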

\begin{proof} We begin the proof by introducing some notation. For all $k \in \{\ell B, \ell B+1, \ldots, (\ell +1)B-1\}$, we use $D(k)$ to denote the set of $d \in\{1,\ldots,n-1\}$ such that time $k$ is the first time in $\{\ell B, \ell B+1, \ldots, ({l}+1)B-1\}$ with an edge $(i,j) \in \overline{\mathcal{E}}(k)$ crossing the cut $d$. Note that $D(k)$ may be empty. Furthermore, given the edge $(i,j)  \in\overline{\mathcal{E}}(k)$ we will use $F_{ij}(k)$ to denote all the cuts $d \in D(k)$ crossed by $(i,j)$ at time $k$. Likewise, it may be the case that $F_{ij}(k)$ is empty.

We begin with the following observation. Suppose $F_{ij}(k)=\{d_1,\ldots,d_q\}$ where $d_1 < d_2 < \cdots < d_q.$ Then \vspace{-0.3cm}
\begin{small} \begin{align} \label{ffineq}
& (f_i'(x_i(k))-f_j'(x_j(k)))^2   \cr
& \geq \sum_{d\in F_{ij}(k)} \left( f_d'(x_d(\ell B))-f_{d+1}'(x_{d+1}(\ell B)) \right)^2.
\end{align} \end{small}We now justify Eq. (\ref{ffineq}). Indeed, since $d_1\in F_{ij}(k)$, we have $d_1\in D(k)$. By definition of $D(k)$ there were no edges $(i,j)$ during times
$\ell B, \ldots, k-1$ which crossed the cut $d_1$. Applying Corollary \ref{order}, we have that $f_i'(x_i(k)) )  \geq f_{d_1}'(x_{d_1}(\ell B))$ and that $f_j'(x_j(k))  \leq f_{d_q+1}'(x_{d_q+1})(\ell B)$.
Therefore, \begin{small} \vspace{-0.22cm}\begin{align}
&f_i'(x_i({k}))-f_j'(x_j({k}))\nonumber\\
&\geq f_{d_1}'(x_{d_1}(\ell B))-f_{{d_q}+1}'(x_{{d_q}+1}(\ell B))\nonumber\\
& \geq  \sum_{d\in F_{ij}({k})} f_d'(x_d(\ell B))-f_{d+1}'(x_{d+1}(\ell B)).\nonumber
\end{align} \vspace{-0.3cm} \end{small} 
This implies that \vspace{-0.2cm} \begin{small}
\begin{align*} & \left[ f_i'(x_i({k}))-f_j'(x_j({k})) \right]^2 \cr & \geq \sum_{d\in F_{ij}({k})} \left[ f_d'(x_d(\ell B))-f_{d+1}'(x_{d+1}(\ell B)) \right]^2
\end{align*} \end{small}\vspace{-0.2cm}
A consequence of this last inequality is that \begin{small} \begin{align}
&\sum_{\!\!\!\!k=\ell B}^{\!\!(\ell +1)B-1}\!\!\!\!\!\!\!\!~~\sum_{(i,j)\in
\overline{\mathcal{E}}({k})}\!\!\!\!\!\!~(f_i'(x_i(k))-f_j'(x_j(k)))^2\nonumber\\
&\geq\!\!\!\!\!\!\!\!\!\sum_{\!\!\!\!{k=\ell B}}^{(\ell +1)B-1}\!\!\!\!\!\!\!\!~~\sum_{(i,j)\in
\overline{\mathcal{E}}(k)}\!\!~~\sum_{d\in
F_{ij}(k)}\!\!\!\!\!~(f_d'(x_d(\ell B))-f_{d+1}'(x_{d+1}(\ell B)))^2\nonumber\\
&=\!\!\!\!\!\!\sum_{{k=\ell B}}^{(\ell +1)B-1}\!\!\!\!\!\!~~\sum_{d\in D(k)} ~(f_d'(x_d(\ell B))-f_{d+1}'(x_{{d}+1}(\ell B)))^2\nonumber\\
&=\sum_{d=1}^{n-1}(f_d'(x_d(\ell B))-f_{d+1}'(x_{d+1}(\ell B)))^2\nonumber,
\end{align} \end{small} where the final equality used the fact that every $d \in \{1, \ldots, n-1\}$ such that $f_{d}'(x_d(\ell B)) - f_{d+1}'(x_{d+1}(\ell B)) \neq 0$ appears in some
$D(k)$, which is a restatement of  Lemma \ref{prop:BConnectivityRelaxation}.
\end{proof}

Our next lemma is a refinement of Lemma \ref{lem:BoundDiffGradOnBInterval} which yields more convenient bounds.

\begin{lemma}\label{lem:LowerBoundOnConvergenceMeasure} \begin{small}
\begin{align}
&\sum_{k=\ell B}^{(\ell +1)B-1}\sum_{(i,j)\in
\overline{\mathcal{E}}({k})}(f_i'(x_i(k))-f_j'(x_j(k)))^2\nonumber\\
&\geq\frac{1}{n^2}\sum_{i=1}^{n}(f_i'(x_i(\ell B))-f_i'(x_i^*))^2.
\end{align} \end{small}
\end{lemma}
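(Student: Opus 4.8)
The plan is to combine the preceding lemma with an elementary inequality relating the consecutive gaps of a sorted sequence to its deviations from any fixed value lying in its range. Note first that both sides of the desired inequality are invariant under relabeling of the nodes: the left-hand side is a sum over edges, and the right-hand side is a sum over all nodes in which $f_i'(x_i^*)$ is, by Proposition \ref{prop:OptimalCondition}, a common value $c$ independent of $i$. Hence I may relabel the nodes so that $f_1'(x_1(\ell B)) \geq \cdots \geq f_n'(x_n(\ell B))$ and invoke Lemma \ref{lem:BoundDiffGradOnBInterval}. Writing $g_i = f_i'(x_i(\ell B))$, so that $g_1 \geq \cdots \geq g_n$ and $f_i'(x_i^*) = c$ for every $i$, it then suffices to establish the purely scalar inequality
\[ \sum_{i=1}^n (g_i - c)^2 \;\leq\; n^2 \sum_{d=1}^{n-1}(g_d - g_{d+1})^2. \]

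The first real step is to localize $c$ inside the range of the current derivatives, namely $g_n \leq c \leq g_1$. This rests on feasibility together with convexity. Since both $\mathbf{x}(\ell B)$ and $\mathbf{x}^*$ lie in $\mathcal{S}$, we have $\sum_i x_i(\ell B) = K = \sum_i x_i^*$. If every $g_i$ strictly exceeded $c$, then for each $i$ the relation $f_i'(x_i(\ell B)) > f_i'(x_i^*)$ together with the monotonicity of $f_i'$ (a consequence of convexity) would force $x_i(\ell B) > x_i^*$, and summing over $i$ would contradict the equality of the two sums; the symmetric argument rules out every $g_i$ being strictly below $c$. Thus $g_n \leq c \leq g_1$, and since each $g_i$ also lies in $[g_n, g_1]$, we obtain the pointwise bound $|g_i - c| \leq g_1 - g_n$ for every $i$.

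The remaining step is the elementary inequality. Expressing the total spread as a telescoping sum, $g_1 - g_n = \sum_{d=1}^{n-1}(g_d - g_{d+1})$, the Cauchy--Schwarz inequality gives
\[ (g_1 - g_n)^2 \;\leq\; (n-1)\sum_{d=1}^{n-1}(g_d - g_{d+1})^2 \;\leq\; n \sum_{d=1}^{n-1}(g_d - g_{d+1})^2. \]
Combining this with the pointwise bound from the previous paragraph yields
\[ \sum_{i=1}^n (g_i - c)^2 \;\leq\; n (g_1 - g_n)^2 \;\leq\; n^2 \sum_{d=1}^{n-1}(g_d - g_{d+1})^2, \]
which is exactly the scalar inequality above. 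Dividing by $n^2$ and feeding the result back into Lemma \ref{lem:BoundDiffGradOnBInterval} completes the proof.

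I expect the only genuinely delicate point to be the localization $g_n \leq c \leq g_1$; the Cauchy--Schwarz estimate is a one-liner. It is worth emphasizing that this localization is not a technicality but the crux of the argument: without $c \in [g_n, g_1]$ the bound $|g_i - c| \leq g_1 - g_n$ breaks down (a value $c$ far outside the range could make $\sum_i (g_i - c)^2$ large while all consecutive gaps stay small), so the feasibility-plus-convexity argument pinning $c$ into the range is doing the essential work.
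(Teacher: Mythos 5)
Your proof is correct, and it reaches the result by a more self-contained route than the paper. Both arguments share the same skeleton: relabel so that $f_1'(x_1(\ell B)) \geq \cdots \geq f_n'(x_n(\ell B))$, invoke Lemma \ref{lem:BoundDiffGradOnBInterval}, and reduce the claim to a scalar inequality comparing $\sum_i (g_i - c)^2$ (with $c$ the common optimal derivative value from Proposition \ref{prop:OptimalCondition}) to the sum of squared consecutive gaps of the sorted sequence. Where the paper at this point appeals to Lemma 5 of \citep{NedicOlshevskyOzdaglarTsitsiklis09} to assert that the relevant ratio is at least $1/n^2$, you prove the scalar inequality from scratch: feasibility ($\mathbf{x}(\ell B), \mathbf{x}^* \in \mathcal{S}$) together with the monotonicity of each $f_i'$ pins $c$ into the range $[g_n, g_1]$, after which $|g_i - c| \leq g_1 - g_n$ and a telescoping-plus-Cauchy--Schwarz step give $\sum_i (g_i - c)^2 \leq n(n-1)\sum_d (g_d - g_{d+1})^2 \leq n^2 \sum_d (g_d - g_{d+1})^2$. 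Your emphasis on the localization step is well placed: as literally displayed in the paper, the minimum of $\sum_{i=1}^{n-1}(s_i - s_{i+1})^2 / \sum_i s_i^2$ over all nonincreasing sequences is zero (take $s$ constant and nonzero), so the citation must be carrying an implicit sign or range hypothesis of exactly the kind you verify explicitly from feasibility. What your version buys is an elementary argument checkable in place, with a marginally sharper constant $n(n-1)$ in place of $n^2$; what the paper's version buys is brevity by outsourcing the combinatorial inequality.
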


\begin{proof}
By Lemma \ref{lem:BoundDiffGradOnBInterval}, if we relabel the nodes so that
$f_1'(x_1(\ell B)), f_2'(x_2(\ell B)), \ldots, f_n'(x_n(\ell B))$ is nonincreasing, \begin{small}
\begin{align}
\frac{\sum_{k=\ell B}^{(\ell +1)B-1}\sum_{(i,j)\in
\overline{\mathcal{E}}(k)}(f_i'(x_i(k))-f_j'(x_j(k)))^2}{\sum_{i=1}^n(f_i'(x_i(\ell B))-f_i'(x_i^*))^2}\nonumber\\
\geq \frac{\sum_{i=1}^{n-1}(f_i'(x_i(\ell B))-f_{i+1}'(x_{i+1}(\ell B)))^2}{\sum_{i=1}^n(f_i'(x_i(\ell B))-f_i'(x_i^*))^2}.\nonumber
\end{align} \end{small}
Let $q = f_1'(x_1^*)$; by Proposition 1, we have that $q=f_i'(x_i^*)$ for all $i=1, \ldots, n$. Define $g_i(z) = f_i(z) - q z$. We can then
rewrite the above inequality as \begin{small}
\begin{align}
\frac{\sum_{k=\ell B}^{(\ell +1)B-1}\sum_{(i,j)\in
\overline{\mathcal{E}}(k)}(f_i'(x_i(k))-f_j'(x_j(k)))^2}{\sum_{i=1}^n(f_i'(x_i(\ell B))-f_i'(x_i^*))^2}\nonumber\\ \geq
\frac{\sum_{i=1}^{n-1}(g_i'(x_i(\ell B))-g_{i+1}'(x_{i+1}(\ell B)))^2}{\sum_{i=1}^n g_i'^2(x_i(\ell B))}.\nonumber
\end{align} \end{small}
Clearly, the sequence $g_1'(x_1(\ell B)), \ldots, g_n'(x_n(\ell B)$ is in nonincreasing order. It therefore follows that \begin{footnotesize}
\begin{align}\label{eq:LowerBoundFractionMeasure}
&\frac{\sum_{k=\ell B}^{(\ell +1)B-1}\sum_{(i,j)\in \overline{\mathcal{E}}(t)}(f_i'(x_i(k))-f_j'(x_j(k)))^2}{\sum_{i=1}^n(f_i'(x_i(\ell B))-f_i'(x_i^*))^2}\nonumber\\
&\geq \min_{s_1\geq s_2\geq\ldots\geq s_n} \frac{\sum_{i=1}^{n-1}(s_i-s_{i+1})^2}{\sum_{i=1}^n s_i^2}.
\end{align} \end{footnotesize} Lemma 5 of \citep{NedicOlshevskyOzdaglarTsitsiklis09} shows that the right hand side is at least $1/n^2$. This immediately implies the lemma.
\end{proof}

We now turn to the statement and proof of our  main result. We will use $R_0$ to denote a measure of initial distance to an optimal solution defined as,
\begin{small} \begin{equation}
R_0\!=\!\sup_{\mathbf{x}\in\mathcal{S}:F(\mathbf{x})\leq
F(\mathbf{x}(0))}\!~~\sup_{\mathbf{x}^*\in\mathcal{X}^*}\!\!\|\mathbf{x}\;-~\mathbf{x}^*\|.\nonumber
\end{equation} \end{small} In words, $R_0$ is the largest distance to the set of optimal solutions from any point whose objective not larger than the objective at ${\bf x}(0)$.
Note that $R_0$ may not be finite, in which case part of our result below will be vacuously true.
%However, as long as $||{\bf x}_n||_2 \rightarrow +\infty$ implies $F({\bf x}_n) \rightarrow +\infty$,
%$R_0$ will be finite.

Our main result is then the following theorem.

\begin{theorem}\label{theorem:QuadraticBound}  \begin{small}
\begin{equation} \label{thmlip} F(\mathbf{x}(k)) -  F(\mathbf{x}^*)  \leq frac{8LR_0^2n^2}{ \lfloor k/B \rfloor}, \end{equation} \end{small} 
where  $L=\max_{i=1, \ldots, n} L_i$ and $\lfloor z \rfloor$ denotes the largest integer which is at most $z$.

Furthermore, if all $f_i(\cdot)$ are $\mu$-strongly convex\footnote{A function $w: \mathbb{R}^m \rightarrow \mathbb{R}$ is called $\mu$-strongly convex if
$w(u) \geq w(v) + w'(v)^T (u-v) + (\mu/2) ||u-v||_2^2$ for all $u,v \in \mathbb{R}^m$. } for some $\mu \geq 0$, then we also have \begin{footnotesize} \begin{equation} \label{thmmu}
F(\mathbf{x}(k)) -  F(\mathbf{x}^*)  \leq  \left( 1 - \frac{\mu}{4Ln^2} \right)^{\lfloor k/B \rfloor} \left( F({\bf x}(0)) - F({\bf x}^*) \right).
\end{equation} \end{footnotesize}
\end{theorem}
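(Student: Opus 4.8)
The plan is to reduce both claims to a single one-block recursion and then solve two different scalar recurrences. First I would telescope Lemma \ref{lem:DecreasingFuncValue} across one connectivity block $k = \ell B, \ldots, (\ell+1)B - 1$. Using $L_i + L_j \le 2L$ to bound each denominator from above, the summed drop is at least $\tfrac{1}{8L}$ times the block sum of squared gradient differences, and Lemma \ref{lem:LowerBoundOnConvergenceMeasure} lower bounds that block sum by $\tfrac{1}{n^2}\sum_i (f_i'(x_i(\ell B)) - f_i'(x_i^*))^2$. Writing $V_\ell = F(\mathbf{x}(\ell B)) - F(\mathbf{x}^*)$, this yields the master inequality
\begin{small}
\begin{equation*}
V_{\ell+1} \le V_\ell - \frac{1}{8Ln^2}\sum_{i=1}^n \big(f_i'(x_i(\ell B)) - f_i'(x_i^*)\big)^2 .
\end{equation*}
\end{small}
Since Lemma \ref{lem:DecreasingFuncValue} also makes $F(\mathbf{x}(k))$ nonincreasing, I get $F(\mathbf{x}(k)) \le F(\mathbf{x}(0))$ for all $k$, and $F(\mathbf{x}(k)) \le V_{\lfloor k/B\rfloor} + F(\mathbf{x}^*)$, so it suffices to bound $V_\ell$.

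For the Lipschitz bound \eqref{thmlip} I would convert the gradient sum into the objective gap via convexity. Convexity of $F$ gives $V_\ell \le \sum_i f_i'(x_i(\ell B))(x_i(\ell B) - x_i^*)$; because $\mathbf{x}(\ell B)$ and $\mathbf{x}^*$ are both feasible, $\sum_i (x_i(\ell B) - x_i^*) = 0$, so I may subtract the common optimal derivative $q = f_i'(x_i^*)$ (Proposition \ref{prop:OptimalCondition}) inside the sum without changing it. Cauchy--Schwarz then gives $V_\ell \le \big(\sum_i (f_i'(x_i(\ell B)) - f_i'(x_i^*))^2\big)^{1/2}\,\|\mathbf{x}(\ell B) - \mathbf{x}^*\|$, and $\|\mathbf{x}(\ell B) - \mathbf{x}^*\| \le R_0$ by the definition of $R_0$ together with $F(\mathbf{x}(\ell B)) \le F(\mathbf{x}(0))$. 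Hence the gradient sum is at least $V_\ell^2 / R_0^2$, and the master inequality becomes $V_{\ell+1} \le V_\ell - V_\ell^2/(8Ln^2R_0^2)$. Dividing by $V_\ell V_{\ell+1}$ and using $V_{\ell+1}\le V_\ell$ gives $1/V_{\ell+1} \ge 1/V_\ell + 1/(8Ln^2R_0^2)$, so $1/V_\ell \ge \ell/(8Ln^2R_0^2)$ and $V_\ell \le 8LR_0^2 n^2/\ell$; taking $\ell = \lfloor k/B\rfloor$ finishes the part.

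For the strongly convex bound \eqref{thmmu} I would instead lower bound the gradient sum linearly in $V_\ell$ using a Polyak--\L ojasiewicz argument. Define the shifted functions $g_i(z) = f_i(z) - qz$; then $g_i'(x_i^*) = 0$, so $x_i^*$ minimizes the $\mu$-strongly convex $g_i$, and $\sum_i g_i(x_i(\ell B)) - \sum_i g_i(x_i^*) = V_\ell$ (the linear terms cancel by feasibility). Minimizing the strong-convexity lower bound $g_i(y) \ge g_i(x) + g_i'(x)(y-x) + \tfrac{\mu}{2}(y-x)^2$ over $y$ yields the per-coordinate PL inequality $g_i(x_i(\ell B)) - g_i(x_i^*) \le (g_i'(x_i(\ell B)))^2/(2\mu) = (f_i'(x_i(\ell B)) - f_i'(x_i^*))^2/(2\mu)$; summing gives $\sum_i (f_i'(x_i(\ell B)) - f_i'(x_i^*))^2 \ge 2\mu V_\ell$. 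Plugging into the master inequality produces $V_{\ell+1} \le (1 - \mu/(4Ln^2))\,V_\ell$, and iterating with $\ell = \lfloor k/B\rfloor$ gives the geometric rate.

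I expect the main obstacle to be the linking step in the Lipschitz case, namely getting a clean \emph{lower} bound on $\sum_i (f_i'(x_i) - f_i'(x_i^*))^2$ in terms of $V_\ell$: the key non-obvious moves are using both feasibilities to zero out $\sum_i(x_i - x_i^*)$ so that the common value $q$ can be inserted, and invoking the definition of $R_0$ (which may be infinite, rendering the bound vacuous). The strongly convex case is structurally easier but hinges on recognizing that the shift $g_i(z) = f_i(z) - qz$ turns $x_i^*$ into the minimizer, after which the PL inequality is routine; both scalar recurrences are then standard.
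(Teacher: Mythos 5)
Your proposal is correct and follows essentially the same route as the paper: the same block recursion obtained from Lemmas \ref{lem:DecreasingFuncValue} and \ref{lem:LowerBoundOnConvergenceMeasure} with $L_i+L_j\le 2L$, the same convexity--plus--Cauchy--Schwarz linking step through $R_0$ and the standard $1/\Delta$ telescoping for the sublinear rate, and the same shift $g_i(z)=f_i(z)-qz$ with a Polyak--\L{}ojasiewicz bound for the geometric rate. The only immaterial difference is that you apply the PL inequality coordinate-wise to each $g_i$ rather than to the aggregate $G=\sum_i g_i$ as the paper does.
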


\begin{proof}
By Lemma \ref{lem:DecreasingFuncValue} we have, \smallskip \begin{footnotesize}
\begin{align}\label{eq:BfDecreasingLowerBound}
&F(\mathbf{x}((\ell +1)B))\nonumber\\&\leq \!F(\mathbf{x}(\ell B))-\!\!\sum_{k=\ell B}^{(\ell +1)B-1}\!\sum_{(i,j)\in
\overline{\mathcal{E}}({k})}\!\!\!\!\!\!\frac{(f_j'(x_j({k}))-f_i'(x_i({k})))^2}{4(L_i+L_j)}\nonumber\\
&\leq \!F(\mathbf{x}(\ell B))- \sum_{i=1}^{n}\frac{(f_i'(x_i(\ell B))-f_i'(x_i^*))^2}{8Ln^2},
\end{align} \end{footnotesize}
where the last step is due to Lemma \ref{lem:LowerBoundOnConvergenceMeasure} and the inequality $L_i+L_j\leq 2L$.

Next, since $F$ is convex we have \begin{footnotesize}
\begin{align}
F({\bf x}^*)-F(\mathbf{x}&(\ell B))
\geq \langle\nabla F(\mathbf{x}(\ell B)),\mathbf{x}^*-\mathbf{x}(\ell B)\rangle\nonumber\\
&=
\langle \nabla F(\mathbf{x}(\ell B))-\nabla F(\mathbf{x}^*),\mathbf{x}^*-\mathbf{x}(\ell B)\rangle\nonumber,
\end{align} \end{footnotesize}
where the last equality follows since, by Proposition \ref{prop:OptimalCondition}, the components of $\nabla F({\bf x}^*)$ are identical and since $\mathbf{x}(\ell B),\;\mathbf{x}^*\in S$, we have that the entries of
${\bf x}^* - {\bf x}(\ell B)$ sum to zero. Next, negating both sides of the above equation and using Cauchy-Schwarz,
\begin{small} \begin{align}
F(\mathbf{x}(\ell B))\!-\!F({\bf x}^*)\!\!
&\leq R_0\|\nabla F(\mathbf{x}(\ell B))-\nabla F(\mathbf{x}^*)\|_2,\label{eq:fDistancetof*}
\end{align} \end{small} where we used  that $F({\bf x}(k))$ is nonincreasing.

Combining Eqs. \eqref{eq:BfDecreasingLowerBound} and \eqref{eq:fDistancetof*} we have \begin{footnotesize}
\begin{align} \label{Fbdec}
&F(\mathbf{x}((\ell +1)B))\!-\!F({\bf x}^*)\nonumber\\
&\leq
F(\mathbf{x}(\ell B))\!-\!F({\bf x}^*)\!-\!\sum_{i=1}^{n}\frac{(f_i'(x_i(\ell B))-f_i'(x_i^*))^2}{8Ln^2}\nonumber\\
&\leq F(\mathbf{x}(\ell B))\!-\!F({\bf x}^*)\!-\!\frac{(F(\mathbf{x}(\ell B))-F({\bf x}^*))^2}{8Ln^2R_0^2}.
\end{align} \end{footnotesize} We now show the last inequality implies Eq. (\ref{thmlip}) via some standard equation manipulations. Letting  $\Delta(k) = F(\mathbf{x}(k))-F({\bf x}^*)$, note that $\Delta(k)$ is nonincreasing by Lemma \ref{lem:DecreasingFuncValue}. We have just shown
\begin{align}
\Delta((\ell +1)B) \leq \Delta(\ell B) - \frac{\Delta^2(\ell B)}{8LR_0^2n^2}.\nonumber
\end{align}
Dividing both sides of this by $\Delta((\ell +1)B)\Delta(\ell B)$ and rearranging, we obtain \begin{footnotesize}
\begin{align}
\frac{1}{\Delta((\ell +1)B)}&\geq\frac{1}{\Delta(\ell B)}+\frac{1}{8LR_0^2n^2}\frac{\Delta(\ell B)}{\Delta{((\ell +1)B)}}\nonumber\\
&\geq\frac{1}{\Delta(\ell B)}+\frac{1}{8LR_0^2n^2},\nonumber
\end{align} \end{footnotesize} where we used the monotonicity of $\Delta(k)$.
Summing up this inequality up over $\ell =0, \ldots, t-1$, we obtain $F(\mathbf{x}(tB))-F({\bf x}^*) \leq\frac{8LR_0^2n^2}{t}$ and using the monotonicity of $F({\bf x}(k))$ we obtain Eq. (\ref{thmlip}).

Turning now to Eq. (\ref{thmmu}), let us define as before $g_i(x) = f_i(x) - q x$ where $q = f_1'(x_1^*)$, and further let $G({\bf x}) = \sum_{i=1}^n g_i(x_i)$. Observe that $G({\bf x})$ is $\mu$-strongly convex and has global minimizer at ${\bf x}^*$. Consequently if ${\bf x} \in \mathcal{S}$, \begin{small}
\begin{align*} \sum_{i=1}^n (f_i'(x_i) - f_i'(x_i^*))^2  = || \nabla G({\bf x})||_2^2 & \geq 2 \mu (G({\bf x}) - G({\bf x}^*)) \cr
& = 2 \mu (F({\bf x}) - F({\bf x}^*) ),
\end{align*} \end{small} where the final equality used the fact that the sum of the entries of ${\bf x}$ and ${\bf x}^*$ is the same since both are in $\mathcal{S}$.
Thus from Eq. (\ref{Fbdec}), \begin{footnotesize}
\begin{align*}
&F(\mathbf{x}((\ell +1)B))\!-\!F({\bf x}^*) \\
&\leq
F(\mathbf{x}(\ell B))\!-\!F({\bf x}^*)\!-\!\sum_{i=1}^{n}\frac{2 \mu \left( F({\bf x}(\ell B)) - F({\bf x}^*) \right) }{8Ln^2}
\end{align*} \end{footnotesize} which immediately implies Eq. (\ref{thmmu}).
\end{proof} 

\begin{remark}   Note that although Eq. (\ref{prob:MainProblem}) does not have constraints on the variables $x_i$, for certain functions $f_i(x_i)$ our algorithm automatically solves a 
constrained version of the problem. For example, if the initial conditions $x_i(0)$ are all nonnegative and $f_i'(0)=f_j'(0)$ for all $i,j$, then by Lemma \ref{mono} the 
constraint $x_i \geq 0$ will automatically be satisfied throughout the execution of the gradient balancing method.  In other words, the  constraints $x_i \geq 0$ can be added ``for free.'' 
The condition on the functions $f_i(x)$ is somewhat restrictive, but admissible $f_i$ include, for example, all polynomials with nonnegative coefficients whose linear coefficient is zero.
\end{remark}

\vspace{-0.4cm}
\section{A simulation}\label{sec:Simulation} 
We now describe a simulation of the gradient balancing protocol on some particular graphs. We consider local objective functions $f_i(x_i)=w_i(x_i-a_i)^4$ where the nonnegative coefficient $w_i$ and  the coefficient $a_i$ are chosen uniformly on $[0,1]$. We set $K=0$. We show simulations of the line and lollipop graphs in Figure \ref{fig:LineSimulation} and Figure \ref{fig:LollipopSimulation}, respectively, where we plot the first time $F({\bf x}(k)) - F({\bf x}^*) < 1/100$ on the y-axis. The figures appear to be broadly consistent with the quadratic bound of Theorem \ref{theorem:QuadraticBound}.

\begin{figure}[t]
\leftskip -0.2cm
\includegraphics[width=3.0in,height=1.6in]{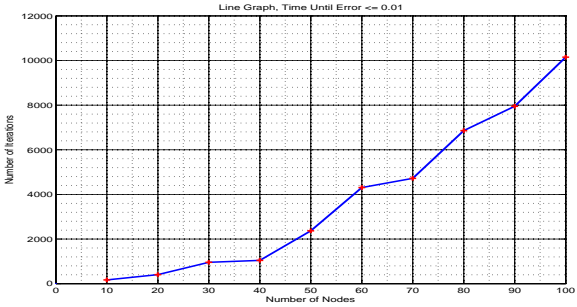}
\caption{Convergence time for  gradient balancing  as a function of the number of nodes for the line graph.}
\label{fig:LineSimulation}
\end{figure}
\begin{figure}[t]
\leftskip -0.2cm
\includegraphics[width=3.0in,height=1.6in]{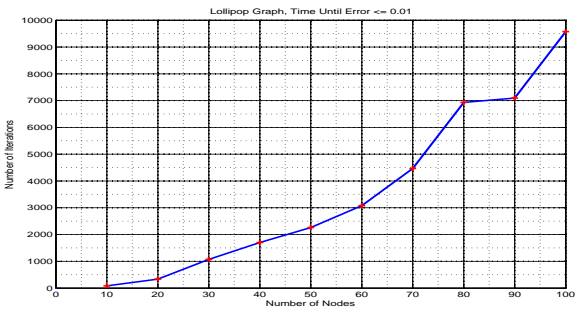}
\caption{Convergence time for  gradient balancing  as a function of the number of nodes for the lollipop graph.}
\label{fig:LollipopSimulation}
\end{figure}
\section{Concluding remarks}\label{sec:Conclusion}
We have proposed a new deterministic, distributed method for the network resource allocation problem with convergence time that scales quadratically in $n$ on time-varying undirected graphs. The main open questions left by this work include improving the convergence time and obtaining similar convergence rate guarantees in the distributed setting with local constraints at each node.

%\footnotesize{
%\bibliographystyle{elsarticle-num}
\bibliography{refs}

\begin{thebibliography}{10}

\bibitem{ArrowHurwicz60}
K.~J. Arrow and L.~Hurwicz.
\newblock Decentralization and computation in resource allocation.
\newblock In {\em Essays in Economics and Econometrics}, pages 34--104.
  University of North Carolina Press, 1960.

\bibitem{Beck14}
A.~Beck.
\newblock The $2$-coordinate descent method for solving double-sided simplex
  constrained minimization problems.
\newblock {\em Journal of Optimization Theory and Applications},
  162(3):892--919, 2014.

\bibitem{Angelia14}
A.~Beck, A.~Nedic, A.~Ozdaglar, and M.~Teboulle.
\newblock An ${O}(1/k)$ gradient method for network resource allocation
  problems.
\newblock {\em IEEE Transactions on Control of Network Systems},
  1(1):2325--5870, 2014.

\bibitem{TBbook}
D.~P. Bertsekas and J.~N. Tsitsiklis.
\newblock {\em Parallel and Distributed Computation}.
\newblock Athena Press, 1989.

\bibitem{CavaroCarliZampieri14}
G.~Cavraro, R.~Carli, and S.~Zampieri.
\newblock A distributed control algorithm for the minimization of the power
  generation cost in smart micro-grid.
\newblock Technical report, 2014.

\bibitem{CortesA}
A.~Cherukuri and J.~Cortes.
\newblock Initialization-free distributed coordination for economic dispatch
  under varying loads and generator commitment.
\newblock Technical report, 2014.

\bibitem{JorgeA}
A.~Cherukuri and J.~Cortes.
\newblock Distributed generator coordination for initialization and anytime
  optimization in economic dispatch.
\newblock {\em IEEE Transactions on Control of Network Systems}, 2015.

\bibitem{HoServiSuri80}
Y.~C. Ho, L.~D. Servi, and R.~Suri.
\newblock A class of center-free resource allocation algorithms.
\newblock {\em Large Scale Systems}, 1:51--62, 1980.

\bibitem{wireless}
M.~Ismail, A.~Abdrabou, and W.~Zhang.
\newblock Cooperative decentralized resource allocation in heterogeneous
  wireless access medium.
\newblock {\em IEEE Transactions on Wireless Communications}, 12(2):714--724,
  2012.

\bibitem{KuroseSimha89}
J.F. Kurose and R.A. Simha.
\newblock A microeconomic approach to optimal resource allocation in
  distributed computer systems.
\newblock {\em {IEEE Trans. on Computers}}, 38:705--717, 1989.

\bibitem{LakshmananDeFarias08}
H.~Lakshmanan and D.P. de~Farias.
\newblock Decentralized resource allocation in dynamic networks of agents.
\newblock {\em {SIAM Journal on Optimization}}, 19(2):911--940, 2008.

\bibitem{Necoara13}
I.~Necoara.
\newblock Random coordinate descent algorithms for multi-agent convex
  optimization over networks.
\newblock {\em {IEEE Trans. on Automatic Control}}, 58(8):2001--2012, 2013.

\bibitem{Necoara15}
I.~Necoara.
\newblock Distributed and parallel random coordinate descent methods for convex
  programming over big data networks.
\newblock Technical report, 2015.

\bibitem{Nec-others}
I.~Necoara, Y.~Nesterov, and F.~Glineur.
\newblock A random coordinate descent method on large-scale optimization
  problems with linear constraints.
\newblock In {\em {Proceedings of ICCOPT 2013}}, pages 1--25, 2011.

\bibitem{NedicOlshevskyOzdaglarTsitsiklis09}
A.~Nedi\'{c}, A.~Olshevsky, A.~Ozdaglar, and J.~N. Tsitsiklis.
\newblock On distributed averaging algorithms and quantization effect.
\newblock {\em IEEE Transactions on Automatic Control}, 54(11):2506--2517,
  2009.

\bibitem{XiaoBoyd06}
L.~Xiao and S.~Boyd.
\newblock Optimal scaling of a gradient method for distributed resource
  allocation.
\newblock {\em {J. of Optimization Theory and Appl.}}, 129(3):469--488, 2006.

\end{thebibliography}
\bibliographystyle{plain}
\end{document}